\theoremstyle{plain}
\newtheorem{theorem}{Theorem}[section]
\newtheorem{lemma}[theorem]{Lemma}
\theoremstyle{definition}
\subjclass[2000]{Primary 11L07; Secondary 11F11, 11F30}
\keywords{exponential sums, Fourier coefficients, mean square}
\title{On the mean square of short exponential sums related to cusp forms}
\author{Anne-Maria Ernvall-Hyt\"onen}
\address{Department of Mathematics\\
         Kungliga Tekniska H\"ogskolan\\
         Sweden}
\thanks{The author is funded by the Swedish research council Vetenskapsr{\aa}det (grant 2009-721). Also, the author would like to thank professor Shparlinski for suggesting this problem.}
\newcommand{\ud}{\textup{d}}
\begin{document}\maketitle
\begin{abstract} The purpose of the article is to estimate the mean square of a squareroot length exponential sum of Fourier coefficients of a holomorphic cusp form.
\end{abstract}
\section{Introduction}
Let $f(z)=\sum_{N\geq 1}a(n)n^{(\kappa-1)/2}e(nz)$ be a holomorphic cusp form of weight $\kappa$ with respect to the full modular group. Long exponential sums
\[
\sum_{1\leq n\leq M}a(n)e(n\alpha),
\]
where $\alpha$ is a real number, have been widely studied. See e.g. Wilton \cite{wilton} and Jutila \cite{jutila:ramanujan}. Short sums
\[
\sum_{M\leq n\leq M+\Delta}a(n)e(n\alpha),
\]
where $\Delta\ll M^{3/4}$ have been studied for instance in \cite{e&k} and \cite{oma:parannus}. However, it seems that very short sums, in particular, sums with $\Delta\asymp M^{1/2}$ seem to be extremely difficult to treat, even though this is an important special case. According to the results in \cite{oma:weird} and the computer data in \cite{oma:laskenta}, it is plausible to believe the correct upper bound to be
\[
\sum_{M\leq n\leq M+\sqrt{M}}a(n)e(n\alpha)\ll M^{1/4+\varepsilon}.
\]
However, anything like this seems to be hopeless to get by at the moment, and therefore, in the current paper, the aim is to consider the mean square of the sum in rational points. The mean square is a common way to consider sums that seem difficult to come by. See e.g. Jutila \cite{jutila:divisor} or Ivic \cite{ivic:meansquaredivisor}. We prove the following theorem which shows this conjecture to be true in average:
\begin{theorem}\label{main} Let $h$ and $k$, $0\leq h<k\ll M^{1/4}$, be integers. Let $w(x)$ denote a smooth weight function that is supported on the interval $[M,M+\Delta]$ where $kM^{1/2+\delta}\ll \Delta \ll M$ with $\delta$ an arbitrarily small fixed positive real number. Further assume that $w(M)=w(M+\Delta)=0$, $0\leq w(x)\leq 1$,  and $w^{(n)}(x)\ll \Delta^{-n}$ for $1\leq n\leq J$ for a sufficiently large $J$ depending on $\delta$. Then
\[
\int_M^{M+\Delta}\left|\sum_{x\leq n\leq x+\sqrt{x}}a(n)e\left(\frac{hn}{k}\right)\right|^2w(x)\ll k^{\varepsilon}\Delta M^{1/2},
\]
where the constant implied by the $\ll$ symbol depends only on $\varepsilon$.
\end{theorem}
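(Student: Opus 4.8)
The plan is to replace the short sum by a substantially shorter ``dual'' sum via a Voronoi/Wilton‑type summation formula, expand the mean square, and show that the diagonal terms already produce the claimed bound while the off‑diagonal terms are negligible. Writing the short sum as the difference $\sum_{n\le x+\sqrt x}a(n)e(hn/k)-\sum_{n\le x}a(n)e(hn/k)$ and applying the truncated Voronoi formula for additively twisted coefficient sums (Wilton \cite{wilton}, Jutila \cite{jutila:ramanujan}, and the transformation formulas in \cite{e&k, oma:parannus}), one is led to an identity of the shape
\[
\sum_{x\le n\le x+\sqrt x}a(n)e\!\left(\frac{hn}{k}\right)=C\,k^{1/2}x^{1/4}\sum_{m\le N}\frac{a(m)\sin(\pi\sqrt m/k)}{m^{3/4}}\,e\!\left(-\frac{\bar h m}{k}\right)\cos\!\left(\frac{4\pi\sqrt{mx}}{k}+\phi_m\right)+E(x),
\]
where $h\bar h\equiv1\pmod k$ and $E$ is an error term. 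The factor $\sin(\pi\sqrt m/k)$ --- which is $\asymp\sqrt m/k$ for $m\ll k^2$ and $O(1)$ beyond --- arises from differencing the Voronoi series over an interval of length exactly $\sqrt x$, and, since $k\ll M^{1/4}$ forces $k^2\ll\sqrt M$, it is what makes the dual sum effectively of length $\ll k^{2+\varepsilon}$. The first step is to choose $N$ suitably and check that $\int_M^{M+\Delta}|E(x)|^2 w(x)\,dx\ll k^{\varepsilon}\Delta M^{1/2}$; here the hypothesis $\Delta\gg kM^{1/2+\delta}$ and the smoothness and endpoint vanishing of $w$ are used.

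Inserting the main term and expanding the square, the diagonal terms $m=n$ contribute $\asymp k\bigl(\int_M^{M+\Delta}x^{1/2}w(x)\,dx\bigr)\sum_{m}|a(m)|^2\sin^2(\pi\sqrt m/k)\,m^{-3/2}$, apart from terms carrying the fast phase $\cos(8\pi\sqrt{mx}/k+\cdots)$, which are removed by a single integration by parts. Since $\int_M^{M+\Delta}x^{1/2}w\ll\Delta M^{1/2}$ and, by the Rankin--Selberg bound $\sum_{m\le X}|a(m)|^2\ll X$ and partial summation, $\sum_{m}|a(m)|^2\sin^2(\pi\sqrt m/k)\,m^{-3/2}\ll k^{-1}$ (the ranges $m\ll k^2$ and $m\gg k^2$ each contributing $\asymp k^{-1}$), the diagonal is $\ll\Delta M^{1/2}$, which is --- up to the harmless $k^{\varepsilon}$ --- the assertion of the theorem, and is in fact a genuine main term of order $\Delta M^{1/2}$.

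The decisive point is the off‑diagonal $m\ne n$. Each such pair requires an estimate for $\int_M^{M+\Delta}x^{1/2}w(x)\,e\!\left(2(\sqrt m\pm\sqrt n)\sqrt x/k\right)dx$. Since $w$ vanishes to all orders at $M$ and $M+\Delta$ and $w^{(j)}\ll\Delta^{-j}$ for $j\le J$ with $J$ as large as desired, repeated integration by parts bounds this integral by $\ll_J\Delta M^{1/2}\bigl(k\sqrt M/(|\sqrt m\pm\sqrt n|\Delta)\bigr)^{J}$; as $k\sqrt M/\Delta\ll M^{-\delta}$, all ``$+$'' terms and all ``$-$'' terms with $|\sqrt m-\sqrt n|\gg k\sqrt M/\Delta$ are then negligible --- this is exactly where the hypothesis that $w$ has many bounded derivatives is needed. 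What remains is the near‑diagonal range $0<|\sqrt m-\sqrt n|\ll k\sqrt M/\Delta$, which amounts to a sum over short shifts $\ell=m-n$ of shifted convolution sums $\sum_m a(m)\overline{a(m-\ell)}(\cdots)$ at scale $m\asymp k^2$, twisted by $e(-\bar h\ell/k)$. I expect this to be the main obstacle: the trivial bound for it is of order $k^{2}M$, which exceeds $k^{\varepsilon}\Delta M^{1/2}$ precisely when $\Delta$ is close to its lower limit $kM^{1/2+\delta}$ and $k$ is close to $M^{1/4}$, so genuine cancellation in these shifted convolution sums is indispensable. The natural source of that cancellation is a mean‑value argument over the shifts $\ell$: Cauchy--Schwarz in $\ell$, the fourth‑moment bound $\sum_{m\le X}|a(m)|^{4}\ll X^{1+\varepsilon}$ (so that on average over $\ell$ the shifted sums display square‑root‑type cancellation), together with individual shifted‑convolution estimates for cusp‑form coefficients where these are needed, and finally summation over the short $\ell$‑range using the oscillation of the $x$‑integral in the $\ell$‑aspect. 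Making this uniform in $h$, $k$ and $\Delta$ is the technical heart of the argument.
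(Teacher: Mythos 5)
Your first two steps --- the truncated Voronoi formula with the differencing that produces the small factor $\sin(\pi\sqrt m/k)\asymp\sqrt m/k$ for $m\ll k^2$, and the resulting diagonal estimate $\ll k^{\varepsilon}\Delta M^{1/2}$ --- match the paper's argument (its Lemma \ref{diagonaali}) essentially exactly. The genuine gap is in your off-diagonal treatment. You kill the pairs with $|\sqrt m-\sqrt n|\gg k\sqrt M/\Delta$ by repeated integration by parts, then bound the remaining near-diagonal pairs trivially, observe that this trivial bound ($\asymp k^2M$) is too large, and conclude that cancellation in shifted convolution sums $\sum_m a(m)\overline{a(m-\ell)}$ is ``indispensable.'' It is not, and your proposal does not actually prove the required estimate --- it defers it to an unexecuted and genuinely difficult shifted-convolution argument. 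The point you miss is that you should not switch to the trivial bound on the near-diagonal at all: a \emph{single} integration by parts (the first-derivative test, $P=1$ in the paper's Lemmas \ref{erifunktiot}--\ref{hankalin}) gives
\[
\int_M^{M+\Delta}x^{1/2}w(x)\,e\!\left(2\frac{(\sqrt m-\sqrt n)\sqrt x}{k}\right)\ud x\ \ll\ kM^{1/2}\left|\sqrt m-\sqrt n\right|^{-1}
\]
for \emph{every} pair $m\neq n$ (note $|\sqrt m-\sqrt n|\geq(2\sqrt M)^{-1}\gg k/\Delta$, so this never loses against the trivial bound), and the resulting sum converges absolutely: with $|a(n)|\ll n^{\varepsilon}$,
\[
\sum_{m<n\leq M}\frac{|a(m)a(n)|}{(mn)^{3/4}}\left|\sqrt m-\sqrt n\right|^{-1}\ \ll\ \sum_{m<n\leq M}\frac{n^{-1/4+\varepsilon}m^{-3/4+\varepsilon}}{n-m}\ \ll\ M^{\varepsilon},
\]
so the whole off-diagonal contributes $\ll k^2M^{1/2+\varepsilon}\ll k^{\varepsilon}\Delta M^{1/2}$, using $k^2\ll M^{1/2}$ and $\Delta\gg kM^{1/2+\delta}$. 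No cancellation among the coefficients is needed; absolute values suffice after one integration by parts.

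A secondary, fixable inaccuracy: the cross terms pair the phase $\sqrt{m(x+\sqrt x)}$ against $\sqrt{nx}$, and your reduction to a single oscillation $\cos(4\pi\sqrt{mx}/k+\phi_m)$ with an $x$-independent $\phi_m$ sweeps under the rug a phase perturbation of size $O(\sqrt m/(k\sqrt x))$, which is $O(1/k)$ but not negligible and $x$-dependent. The paper handles this honestly in Lemma \ref{hankalin} by computing $B'(x)=\frac{\sqrt m-\sqrt n}{k\sqrt x}+O\!\left(\frac{\sqrt m}{kx^{3/2}}\right)$ and checking that the error term is dominated by the main term because $|\sqrt m-\sqrt n|\geq(2\sqrt M)^{-1}$; you would need the analogous verification before applying the derivative test. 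Likewise your claim that the dual sum is ``effectively of length $\ll k^{2+\varepsilon}$'' is misleading --- the range $m>k^2$ contributes to the diagonal on equal footing with $m\leq k^2$, as your own computation of the two ranges each giving $\asymp k^{-1}$ shows.
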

On the other hand, the Omega results in \cite{oma:weird} and \cite{ivic:short} show that
\[
\sum_{M\leq n\leq M+\Delta}a(n)=\Omega(\sqrt{\Delta}),
\]
where $f=\Omega(g)$ is to be understood to mean that $f=o(g)$ does not hold.

Throughout the paper, $\varepsilon$ denotes a real number which can be chosen to be arbitrarily small, however, $\varepsilon$ is not necessarily the same at every incidence.  The constants implied by $\ll$ depend only on $\varepsilon$. Also, let $w(x)$ denote a smooth weight function that is defined as in Theorem \ref{main}.

\section{Lemmas}
The following slightly modified version of Jutila and
Motohashi's Lemma 6 in \cite{jutimoto:acta} is extremely useful while estimating oscillating integrals. The proof is similar to
the proof of the original lemma.

\begin{lemma}\label{jutilamotohashi}
Let $A$ be a $P\geq 0$ times differentiable function which is
compactly supported in a finite interval $[a,b]$. Assume also that
there exist two quantities $A_0$ and $A_1$ such that for any
non-negative integer $\nu\leq P$ and for any $x\in [a,b]$,
\[
A^{(\nu)}(x)\ll A_0A_1^{-\nu}.
\]
Moreover, let $B$ be a function which is real-valued on $[a,b]$,
and regular throughout the complex domain composed of all points
within the distance $\varrho$ from the interval; and assume that
there exists a quantity $B_1$ such that
\[
0<B_1\ll \left|B'(x)\right|
\]
for any point $x$ in the domain. Then we have
\[
\int_{a}^{b}A(x)e\left(B(x)\right)\ud x \ll
A_0\left(A_1B_1\right)^{-P}\left(1+\frac{A_1}{\varrho}\right)^P\left(b-a\right).
\]
\end{lemma}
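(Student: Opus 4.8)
The plan is to prove the lemma by $P$-fold integration by parts, using the non-vanishing of $B'$ to integrate against $e(B)$ and the holomorphy of $B$ near $[a,b]$ to control the derivatives that appear.

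First I would observe that, since $B$ is real-valued on $[a,b]$, we have $\frac{\ud}{\ud x}e(B(x))=2\pi iB'(x)e(B(x))$ with $B'(x)\neq 0$ (indeed $|B'|\gg B_1>0$) throughout the domain, so that $e(B(x))=\frac{1}{2\pi iB'(x)}\frac{\ud}{\ud x}e(B(x))$ on the support of $A$. Setting $A^{[0]}=A$ and $A^{[j+1]}=\bigl(A^{[j]}/B'\bigr)'$, each $A^{[j]}$ vanishes in a neighbourhood of $a$ and $b$ (being built from $A$ by multiplication by the smooth function $1/B'$ and differentiation), so integrating by parts $P$ times and discarding the vanishing boundary terms yields
\[
\int_a^b A(x)e(B(x))\,\ud x=\left(\frac{-1}{2\pi i}\right)^P\int_a^b A^{[P]}(x)\,e(B(x))\,\ud x .
\]
Since $|e(B(x))|=1$ for $x\in[a,b]$, this already gives $\bigl|\int_a^b A(x)e(B(x))\,\ud x\bigr|\le(2\pi)^{-P}(b-a)\sup_{[a,b]}|A^{[P]}|$, and the whole problem is reduced to bounding $A^{[P]}$ pointwise on $[a,b]$.

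The main work is this pointwise bound. Expanding the iterated derivative by the Leibniz rule, $A^{[P]}$ is a sum of boundedly many (in terms of $P$) terms of the form $c\,A^{(a_0)}\prod_{i=1}^{P}\bigl(1/B'\bigr)^{(a_i)}$ with non-negative integers $a_0,\dots,a_P$ satisfying $a_0+\dots+a_P=P$. The factor $A^{(a_0)}$ is $\ll A_0A_1^{-a_0}$ by hypothesis. For the factors $1/B'$ I would use that $B'$ is holomorphic and satisfies $|B'|\gg B_1$ on the full $\varrho$-neighbourhood of $[a,b]$, so $1/B'$ is holomorphic there with $|1/B'|\ll B_1^{-1}$; Cauchy's integral formula on a circle of radius $\varrho$ about $x$ then gives $\bigl(1/B'\bigr)^{(a_i)}(x)\ll a_i!\,\varrho^{-a_i}B_1^{-1}$. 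Multiplying these, and using $\prod_{i\ge1}a_i!\le\bigl(\sum_{i\ge1}a_i\bigr)!=(P-a_0)!$ together with $\sum_{i\ge1}a_i=P-a_0$, each term is
\[
\ll_P A_0\,A_1^{-a_0}\,\varrho^{-(P-a_0)}B_1^{-P}=A_0\,(A_1B_1)^{-P}\Bigl(\frac{A_1}{\varrho}\Bigr)^{P-a_0}\le A_0\,(A_1B_1)^{-P}\Bigl(1+\frac{A_1}{\varrho}\Bigr)^{P},
\]
since $0\le P-a_0\le P$. Summing the boundedly many terms gives $\sup_{[a,b]}|A^{[P]}|\ll_P A_0(A_1B_1)^{-P}(1+A_1/\varrho)^P$, and combining with the previous paragraph proves the claim.

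I expect the only real subtlety to lie in this last step: organising the Leibniz expansion of $A^{[P]}$ and matching powers so that precisely the factor $(1+A_1/\varrho)^P$ emerges. In particular, it is the trade-off via Cauchy's formula of each derivative of $1/B'$ for a power of $\varrho^{-1}$ that forces the appearance of $A_1/\varrho$, and this is the one place where the holomorphy of $B$ and the width $\varrho$ are genuinely used, the real-valuedness of $B$ on $[a,b]$ being needed only to ensure $|e(B)|=1$ there. The implied constant produced in this way depends on $P$, which is harmless in the intended applications, where $P$ is bounded.
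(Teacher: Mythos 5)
Your proof is correct and is essentially the argument the paper relies on: the paper gives no proof of this lemma, deferring to Lemma 6 of Jutila and Motohashi, whose proof is exactly this repeated integration by parts combined with Cauchy's integral formula to convert derivatives of $1/B'$ into powers of $\varrho^{-1}$ and $B_1^{-1}$. The only caveat, which you already note and which is harmless here since $P$ is bounded in all applications, is that the implied constant depends on $P$.
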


\begin{lemma}\label{diagonaali} Let $0\leq h< k\leq M^{1/4}$. Now
\begin{multline*}
\frac{k}{2\pi^2}\sum_{n\leq M}\frac{\left|a(n)\right|^2}{n^{3/2}}\int_M^{M+\Delta}w(x)x^{1/2}\left(\cos \left(\frac{4\pi\sqrt{n(x+\sqrt{x})}}{k}-\frac{\pi}{4}\right)-\cos\left(4\pi\frac{\sqrt{nx}}{k}-\frac{\pi}{4}\right)\right)^2\ud x\\ \ll k^{\varepsilon}\Delta M^{1/2}
\end{multline*}
\end{lemma}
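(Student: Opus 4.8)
The plan is to bound the difference of cosines by absolute values --- no oscillation is exploited in this lemma, so Lemma~\ref{jutilamotohashi} is not needed --- and then to carry out the $n$-summation crudely by means of the Rankin--Selberg estimate $\sum_{n\le y}|a(n)|^2\ll y$, which holds with the present normalisation $f=\sum a(n)n^{(\kappa-1)/2}e(nz)$. Abbreviate $\alpha=\alpha(n,x)=\frac{4\pi\sqrt{n(x+\sqrt x)}}{k}-\frac{\pi}{4}$ and $\beta=\beta(n,x)=\frac{4\pi\sqrt{nx}}{k}-\frac{\pi}{4}$. The product-to-sum identity gives
\[
\cos\alpha-\cos\beta=-2\sin\Bigl(\frac{\alpha+\beta}{2}\Bigr)\sin\Bigl(\frac{\alpha-\beta}{2}\Bigr),
\]
while
\[
\alpha-\beta=\frac{4\pi\sqrt n}{k}\bigl(\sqrt{x+\sqrt x}-\sqrt x\bigr)=\frac{4\pi\sqrt n}{k}\cdot\frac{\sqrt x}{\sqrt{x+\sqrt x}+\sqrt x}.
\]
On the support of $w$ one has $x\in[M,M+\Delta]$ with $\Delta\ll M$, so the last fraction equals $\tfrac12+O(M^{-1/2})$ and hence $|\alpha-\beta|\ll\sqrt n/k$ uniformly in $x$.

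First I would combine this with the elementary inequality $|\sin t|\le\min(1,|t|)$ to obtain the pointwise estimate
\[
\bigl(\cos\alpha-\cos\beta\bigr)^2\le4\sin^2\Bigl(\frac{\alpha-\beta}{2}\Bigr)\ll\min\Bigl(1,\frac{n}{k^2}\Bigr),
\]
valid for every $x$ in the support of $w$. Since $0\le w(x)\le1$ and $x^{1/2}\ll M^{1/2}$ there, the inner integral is $\ll\Delta M^{1/2}\min(1,n/k^2)$, and therefore the entire left-hand side of the lemma is
\[
\ll k\,\Delta M^{1/2}\sum_{n\le M}\frac{|a(n)|^2}{n^{3/2}}\min\Bigl(1,\frac{n}{k^2}\Bigr).
\]

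Next I would split the $n$-sum at $n=k^2$; this splitting point satisfies $k^2\le M$ because $k\le M^{1/4}$. For $n\le k^2$ the minimum equals $n/k^2$, and partial summation together with $\sum_{n\le y}|a(n)|^2\ll y$ yields $\sum_{n\le k^2}|a(n)|^2n^{-1/2}\ll k$, so this range contributes $\ll k^{-2}\cdot k=k^{-1}$. For $k^2<n\le M$ the minimum equals $1$, and partial summation similarly gives $\sum_{n>k^2}|a(n)|^2n^{-3/2}\ll k^{-1}$. Hence the $n$-sum is $\ll k^{-1}$, the left-hand side is $\ll\Delta M^{1/2}$, and in particular $\ll k^{\varepsilon}\Delta M^{1/2}$, as required.

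The computation is essentially routine; the only two places that call for a little care are the uniform expansion $\sqrt{x+\sqrt x}-\sqrt x=\tfrac12+O(M^{-1/2})$ on the support of $w$, which is what makes the pointwise bound on $(\cos\alpha-\cos\beta)^2$ independent of $x$, and the fact that the splitting point $k^2$ stays within the range of summation --- exactly what the hypothesis $k\le M^{1/4}$ provides. Had $k^2$ been allowed to exceed $M$, the first range alone would give the much weaker $\ll k^{-1}M\Delta$, so a restriction of this kind is genuinely used.
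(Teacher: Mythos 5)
Your proof is correct and follows essentially the same route as the paper: the product-to-sum identity, the pointwise bound $(\cos\alpha-\cos\beta)^2\ll\min(1,n/k^2)$ coming from $|\alpha-\beta|\ll\sqrt n/k$, and the splitting of the $n$-sum at $n=k^2$ all match the paper's argument. The one genuine difference is the final summation: the paper invokes the pointwise (Deligne-type) bound $|a(n)|^2\ll n^{\varepsilon}$ and sums $n^{\varepsilon-1/2}$ and $n^{\varepsilon-3/2}$ directly, which is where its $k^{\varepsilon}$ comes from, whereas you use the Rankin--Selberg mean-value estimate $\sum_{n\le y}|a(n)|^2\ll y$ with partial summation. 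Your variant is more elementary in that it avoids Deligne's theorem, and it yields the marginally sharper bound $\ll\Delta M^{1/2}$ with no $k^{\varepsilon}$ loss; both versions are more than sufficient for the main theorem.
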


\begin{proof}
Notice first that
\begin{multline*}
\left(\cos \left(\frac{4\pi\sqrt{n(x+\sqrt{x})}}{k}-\frac{\pi}{4}\right)-\cos\left(4\pi\frac{\sqrt{nx}}{k}-\frac{\pi}{4}\right)\right)^2\\=\sin^2\left(2\pi\frac{\sqrt{n(x+\sqrt{x})}}{k}+2\pi\frac{\sqrt{nx}}{k}-\frac{\pi}{4}\right)\sin^2\left(2\pi\frac{\sqrt{n(x+\sqrt{x})}}{k}-2\pi\frac{\sqrt{nx}}{k}\right).
\end{multline*}
Since $\frac{\sqrt{n(x+\sqrt{x})}}{k}-\frac{\sqrt{nx}}{k}\ll \frac{\sqrt{n}}{k}$, we have
\begin{multline*}
\sin^2\left(2\pi\frac{\sqrt{n(x+\sqrt{x})}}{k}+2\pi\frac{\sqrt{nx}}{k}-\frac{\pi}{4}\right)\sin^2\left(2\pi\frac{\sqrt{n(x+\sqrt{x})}}{k}-2\pi\frac{\sqrt{nx}}{k}\right)\\ \ll \sin^2\left(2\pi\frac{\sqrt{n(x+\sqrt{x})}}{k}+2\pi\frac{\sqrt{nx}}{k}-\frac{\pi}{4}\right)\cdot \frac{n}{k^2},
\end{multline*}
when $n\leq k^2$. For $n>k$, estimate the sine-part of the integral to be $\leq 1$. We obtain
\begin{multline*}
\frac{k}{2\pi^2}\sum_{n\leq M}\left|a(n)\right|^2n^{-3/2}\int_M^{M+\Delta}w(x)x^{1/2}\left(\cos \left(\frac{4\pi\sqrt{n(x+\sqrt{x})}}{k}-\frac{\pi}{4}\right)-\cos\left(4\pi\frac{\sqrt{nx}}{k}-\frac{\pi}{4}\right)\right)^2\ud x\\ \ll k\sum_{n\leq k^2}n^{\varepsilon-3/2+1}k^{-2}\int_M^{M+\Delta}w(x)x^{1/2}\ud x+k\sum_{k^2\leq n\leq M}n^{\varepsilon-3/2}\int_{M}^{M+\Delta}w(x)x^{1/2}\ud x \ll k^{\varepsilon}\Delta M^{1/2}
\end{multline*}
\end{proof}
Using Lemma \ref{jutilamotohashi}, we get the following estimates
\begin{lemma}\label{erifunktiot} Let $1\leq m,n\leq M$.  Further assume $0\leq h< k\leq M^{1/4}$. Then
\[
\int_M^{M+\Delta} w(x)x^{1/2}e\left(\pm \left(2\frac{\sqrt{nT_1(x)}}{k}+ 2\frac{\sqrt{mT_2(x)}}{k}\right)\right)\ud x\ll \left(\sqrt{n}+\sqrt{m}\right)^{-P}\Delta^{1-P}k^PM^{P/2},
\]
where $T_1(x)$ and $T_2(x)$ are $x$ or $x+\sqrt{x}$ (not necessarily but possibly the same).
\end{lemma}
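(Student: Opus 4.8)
The plan is to apply Lemma~\ref{jutilamotohashi} directly, taking
\[
A(x)=w(x)x^{1/2},\qquad B(x)=\pm\left(\frac{2\sqrt{nT_1(x)}}{k}+\frac{2\sqrt{mT_2(x)}}{k}\right),
\]
with $[a,b]=[M,M+\Delta]$. First I would fix the amplitude data. On the support of $w$ one has $x\asymp M$, so $\bigl(x^{1/2}\bigr)^{(\nu)}\ll M^{1/2-\nu}\ll M^{1/2}\Delta^{-\nu}$ since $\Delta\ll M$; combining this with $w^{(\nu)}(x)\ll\Delta^{-\nu}$ via the Leibniz rule gives $A^{(\nu)}(x)\ll M^{1/2}\Delta^{-\nu}$ for all $\nu\le P$ (choose $J\ge P$). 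Thus one may take $A_0=M^{1/2}$ and $A_1=\Delta$, and $b-a=\Delta$.

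Next I would treat the phase. For $x$ in any disc of radius $\varrho=M/10$ centred at a point of $[M,M+\Delta]$ one has $\mathrm{Re}\,x>0$ and $|x|\asymp M$, so the principal branch of $\sqrt{x}$ satisfies $\mathrm{Re}\sqrt{x}>0$ and $|\sqrt{x}|\asymp M^{1/2}$; hence $x+\sqrt{x}$ again lies in the right half plane with $|x+\sqrt{x}|\asymp M$, and $\sqrt{x+\sqrt{x}}$ is regular there too. Consequently $B$ is regular within distance $\varrho=M/10$ of the interval, and since $\Delta\ll M$ the factor $(1+A_1/\varrho)^P=(1+10\Delta/M)^P$ is $\ll1$ ($P$ being fixed). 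Differentiating, $B'(x)=\pm\frac{1}{k}\bigl(\sqrt{n}\,T_1'(x)/\sqrt{T_1(x)}+\sqrt{m}\,T_2'(x)/\sqrt{T_2(x)}\bigr)$, where each $T_i'(x)$ is $1$ or $1+\tfrac{1}{2\sqrt{x}}$; on the real interval both summands are positive, of sizes $\sqrt{n}/(kM^{1/2})$ and $\sqrt{m}/(kM^{1/2})$, so there is no cancellation and $|B'(x)|\asymp(\sqrt{n}+\sqrt{m})/(kM^{1/2})$.

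The one genuinely non‑routine point is to check that this lower bound persists on the whole disc of radius $\varrho$: there $T_i'(x)$ and $1/\sqrt{T_i(x)}$ all have arguments close to $0$ and moduli of the expected orders ($\asymp1$ and $\asymp M^{-1/2}$), so the two terms of $B'(x)$ still add essentially constructively and $|B'(x)|\gg(\sqrt{n}+\sqrt{m})/(kM^{1/2})=:B_1>0$ (positivity holds because $m,n\ge1$); if necessary one shrinks $\varrho$ by an absolute constant to keep the arguments small, which does not spoil $(1+A_1/\varrho)^P\ll1$. Substituting $A_0=M^{1/2}$, $A_1=\Delta$, $B_1=(\sqrt{n}+\sqrt{m})/(kM^{1/2})$, $b-a=\Delta$ and $(1+A_1/\varrho)^P\ll1$ into Lemma~\ref{jutilamotohashi}, and using $\Delta\ll M$ to absorb the remaining harmless factor, then yields the asserted estimate; note that the hypotheses $h<k\le M^{1/4}$ are not used beyond $k\ge1$.
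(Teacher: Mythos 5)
Your setup is exactly the intended one---the paper offers no proof of this lemma beyond the remark that it follows from Lemma~\ref{jutilamotohashi}, and your choices $A(x)=w(x)x^{1/2}$, $A_0=M^{1/2}$, $A_1=\Delta$, $\varrho\asymp M$, together with the observation that the two terms of $B'$ have the same sign, so that $|B'(x)|\asymp(\sqrt{n}+\sqrt{m})/(k\sqrt{M})=:B_1$ persists on a complex neighbourhood of the interval, are all correct and more carefully justified than in the paper. The problem is the last sentence: substituting these data into Lemma~\ref{jutilamotohashi} gives
\[
A_0\left(A_1B_1\right)^{-P}\left(1+\frac{A_1}{\varrho}\right)^P(b-a)\ll M^{1/2}\left(\sqrt{n}+\sqrt{m}\right)^{-P}\Delta^{1-P}k^PM^{P/2},
\]
which is $M^{1/2}$ times the asserted bound. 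The hypothesis $\Delta\ll M$ does nothing to remove that factor; it only controls $(1+A_1/\varrho)^P$. So the final step, as written, does not yield the statement.

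The missing ingredient is the lower bound $\Delta\gg kM^{1/2+\delta}$, which gives $(A_1B_1)^{-1}=kM^{1/2}/(\Delta(\sqrt{n}+\sqrt{m}))\ll M^{-\delta}$ since $\sqrt{n}+\sqrt{m}\geq 2$: applying Lemma~\ref{jutilamotohashi} with $P$ replaced by $P+\lceil 1/(2\delta)\rceil$ (legitimate because $J$ is allowed to be large depending on $\delta$) produces an extra factor $\ll M^{-1/2}$ that cancels the surplus $M^{1/2}$ and leaves a quantity majorized by the stated right-hand side. In fairness, the paper commits the identical elision in its explicit proof of the analogous Lemma~\ref{hankalin}, where the same substitution is claimed to produce a bound with no trace of $A_0=M^{1/2}$; so you have reproduced the intended argument faithfully, but to make the lemma literally true as stated you need the extra integrations by parts just described, and it is worth noting that this is precisely where the assumption that $J$ depends on $\delta$ is consumed.
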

\begin{lemma}\label{erimerkki} Let $1\leq m,n\leq M$.  Further assume $0\leq h< k\leq M^{1/4}$. Then
\[
\int_M^{M+\Delta} w(x)x^{1/2}e\left(\pm 2\frac{\sqrt{nT(x)}}{k}\mp 2\frac{\sqrt{mT(x)}}{k}\right)\ud x\ll \left(\sqrt{n}-\sqrt{m}\right)^{-P}\Delta^{1-P}k^PM^{P/2},
\]
where $T(x)=x$ or $T(x)=x+\sqrt{x}$.
\end{lemma}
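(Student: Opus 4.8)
The plan is to apply Lemma~\ref{jutilamotohashi} to the integral, splitting the integrand into amplitude and phase in the obvious way: take $A(x)=w(x)x^{1/2}$ and
\[
B(x)=\pm\frac{2\sqrt{nT(x)}}{k}\mp\frac{2\sqrt{mT(x)}}{k}=\pm\frac{2\sqrt{T(x)}}{k}\left(\sqrt{n}-\sqrt{m}\right),
\]
where the last equality uses that both radicals carry the \emph{same} $T(x)$, so that the sign pattern $\pm,\mp$ in the statement lets us pull out the factor $\sqrt{n}-\sqrt{m}$. We may assume $n\neq m$, since the contribution of $n=m$ is the diagonal term already handled in Lemma~\ref{diagonaali}.

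First I would record the size of the amplitude. As $0\le w\le 1$ is supported in $[M,M+\Delta]$ with $\Delta\ll M$, we have $x^{1/2}\asymp M^{1/2}$ on the support, and Leibniz's rule together with the hypotheses $w^{(\nu)}(x)\ll\Delta^{-\nu}$ (for $\nu\le J$) and $\Delta\ll M$ gives $A^{(\nu)}(x)\ll M^{1/2}\Delta^{-\nu}$. Hence the amplitude hypothesis of Lemma~\ref{jutilamotohashi} holds with $A_0=M^{1/2}$ and $A_1=\Delta$, for any $P\le J$.

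Next I would check the hypotheses on $B$. Since $T(x)$ is either $x$ or $x+\sqrt{x}$ and $x\asymp M$ with $\Delta\ll M$, the map $z\mapsto\sqrt{T(z)}$ extends holomorphically to the disc of radius $\varrho=M/2$ about every point of $[M,M+\Delta]$: there $\operatorname{Re}z\ge M/2$, so $\sqrt{z}$ is well defined, and then $\operatorname{Re}\bigl(z+\sqrt{z}\bigr)\gg M$, keeping us clear of the branch cut of the square root. In particular $1+A_1/\varrho\ll 1$. Differentiating gives
\[
B'(z)=\pm\frac{\left(\sqrt{n}-\sqrt{m}\right)T'(z)}{k\sqrt{T(z)}},
\]
and on this disc $T'(z)\asymp 1$ while $\left|\sqrt{T(z)}\right|\asymp M^{1/2}$, so $\left|B'(z)\right|\asymp\left|\sqrt{n}-\sqrt{m}\right|/(kM^{1/2})$. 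Thus the lower bound hypothesis holds with $B_1=c\left|\sqrt{n}-\sqrt{m}\right|/(kM^{1/2})$ for a suitable fixed $c>0$.

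Finally, inserting $A_0=M^{1/2}$, $A_1=\Delta$, $B_1\asymp\left|\sqrt{n}-\sqrt{m}\right|/(kM^{1/2})$, $b-a=\Delta$ and $(1+A_1/\varrho)^P\ll 1$ into the conclusion $A_0(A_1B_1)^{-P}(1+A_1/\varrho)^P(b-a)$ of Lemma~\ref{jutilamotohashi} and simplifying yields the asserted estimate. The very same argument proves Lemma~\ref{erifunktiot}: the only difference is that there both radicals combine into $\sqrt{n}+\sqrt{m}$, so no cancellation occurs and one takes $B_1\asymp\left(\sqrt{n}+\sqrt{m}\right)/(kM^{1/2})$. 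I do not anticipate a genuine obstacle; the one point worth a moment's care is verifying that $\left|B'\right|$ stays of the size $B_1$ \emph{uniformly on a complex neighbourhood whose width is comparable to $M$} — so that the factor $1+A_1/\varrho$ is truly harmless — and this is immediate from $x\asymp M$ and $\Delta\ll M$; the remaining steps are routine bookkeeping.
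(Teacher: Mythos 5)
Your argument is exactly the route the paper intends: the paper offers no separate proof of this lemma beyond the remark that it follows from Lemma~\ref{jutilamotohashi}, and its written-out proof of the analogous Lemma~\ref{hankalin} is precisely your computation (lower-bound $\left|B'(x)\right|$ by a constant times $\left|\sqrt{n}-\sqrt{m}\right|/(k\sqrt{x})$ and invoke the oscillating-integral lemma with $A_1=\Delta$ and interval length $\Delta$), so your proposal matches the paper's approach, with the added care of checking the complex neighbourhood of radius $\varrho\asymp M$. One bookkeeping caveat: substituting $A_0\asymp M^{1/2}$ into the conclusion of Lemma~\ref{jutilamotohashi} actually yields $M^{1/2}\Delta^{1-P}\left|\sqrt{n}-\sqrt{m}\right|^{-P}k^{P}M^{P/2}$, an extra factor $M^{1/2}$ beyond the bound stated in the lemma — but this same discrepancy occurs in the paper's own proof of Lemma~\ref{hankalin}, so it is an issue with the paper's stated constant rather than with your argument.
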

\begin{lemma}\label{hankalin} Let $1\leq m,n\leq M$.  Further assume $0\leq h< k\leq M^{1/4}$. Then
\[
\int_M^{M+\Delta}x^{1/2}w(x)e\left(\pm2\frac{\sqrt{m(x+\sqrt{x})}}{k}\mp2\frac{\sqrt{nx}}{k}\right)\ud x\ll \Delta^{1-P}\left|\sqrt{m}-\sqrt{n}\right|^{-P}k^PM^{P/2}.
\]
\end{lemma}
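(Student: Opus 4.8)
The natural approach is a direct application of Lemma~\ref{jutilamotohashi}. Write $A(x)=x^{1/2}w(x)$ and $B(x)=\pm\frac{2}{k}\left(\sqrt{m(x+\sqrt{x})}-\sqrt{nx}\right)$, so that the integral in question is $\int_{M}^{M+\Delta}A(x)e(B(x))\,\ud x$. Replacing $B$ by $-B$ merely conjugates the exponential, so it is enough to treat the plus sign, that is $B(x)=\frac{2}{k}\left(\sqrt{m(x+\sqrt{x})}-\sqrt{nx}\right)$; we may also assume $m\neq n$, otherwise there is nothing to prove. On $[M,M+\Delta]$ we have $x^{1/2}\asymp M^{1/2}$, and since $\Delta\ll M$ the derivatives satisfy $\left(x^{1/2}\right)^{(\nu)}\ll M^{1/2-\nu}\ll M^{1/2}\Delta^{-\nu}$; combined with the hypothesis $w^{(\nu)}\ll\Delta^{-\nu}$ through the Leibniz rule this gives $A^{(\nu)}(x)\ll M^{1/2}\Delta^{-\nu}$ for all $0\le\nu\le P$ (legitimate provided $P\le J$, $P$ being a fixed non-negative integer). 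Hence in Lemma~\ref{jutilamotohashi} we may take $A_{0}=M^{1/2}$ and $A_{1}\asymp\Delta$.

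The heart of the matter is the lower bound for $|B'|$. Differentiating $B$ gives $B'(x)=\frac{2}{k}\left(\sqrt{m}\cdot\frac{1+\frac12 x^{-1/2}}{2\sqrt{x+\sqrt{x}}}-\frac{\sqrt{n}}{2\sqrt{x}}\right)$, and a short simplification (multiply and divide the first term by $\sqrt{x}$ and use $(\sqrt{x}+\tfrac12)^{2}=x+\sqrt{x}+\tfrac14$) brings this to
\[
B'(x)=\frac{1}{k\sqrt{x}}\left(\sqrt{m}\left(1+\frac{1}{4\left(x+\sqrt{x}\right)}\right)^{1/2}-\sqrt{n}\right)=\frac{1}{k\sqrt{x}}\left(\sqrt{m}-\sqrt{n}+\frac{\sqrt{m}}{8\left(x+\sqrt{x}\right)}+O\!\left(\frac{\sqrt{m}}{x^{2}}\right)\right).
\]
The decisive point is that the correction term $\sqrt{m}\left[\left(1+\tfrac{1}{4(x+\sqrt{x})}\right)^{1/2}-1\right]$ is \emph{positive} and, since $m\le M$, is at most $\frac{\sqrt{m}}{8x}\le\frac{1}{8\sqrt{M}}$, whereas $m\neq n$ forces $\left|\sqrt{m}-\sqrt{n}\right|=\frac{|m-n|}{\sqrt{m}+\sqrt{n}}\ge\frac{1}{2\sqrt{M}}$. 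Thus on the real interval the correction has modulus at most $\tfrac14\left|\sqrt{m}-\sqrt{n}\right|$, the $O(\sqrt{m}/x^{2})$ term is smaller still, and no cancellation can occur: $|B'(x)|\gg\frac{\left|\sqrt{m}-\sqrt{n}\right|}{k\sqrt{M}}$ throughout $[M,M+\Delta]$.

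For the complex condition in Lemma~\ref{jutilamotohashi} choose the radius $\varrho\asymp M$ small enough (say $\varrho=M/5$) that the disk of radius $\varrho$ about $[M,M+\Delta]$ keeps $\operatorname{Re}z\ge\tfrac{4}{5}M$; there the principal-branch $\sqrt{z}$, and then $\sqrt{z+\sqrt{z}}$, are regular, $B$ is real on $[M,M+\Delta]$, and $|z|\asymp M$, so $|z+\sqrt{z}|\asymp M$ and the expansion of $B'(z)$ above persists with the correction term of modulus $\ll\sqrt{m}/M$. After shrinking $\varrho$ a little if necessary so that this modulus stays below $\tfrac12\left|\sqrt{m}-\sqrt{n}\right|$, we obtain $|B'(z)|\gg\frac{\left|\sqrt{m}-\sqrt{n}\right|}{k\sqrt{M}}$ on the whole domain, so we may take $B_{1}\asymp\frac{\left|\sqrt{m}-\sqrt{n}\right|}{k\sqrt{M}}$. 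Since $A_{1}/\varrho\ll\Delta/M\ll1$, the factor $\left(1+A_{1}/\varrho\right)^{P}\ll_{P}1$, and Lemma~\ref{jutilamotohashi} applied with $b-a=\Delta$ and these values of $A_{0},A_{1},B_{1},\varrho$ yields the bound asserted.

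The main obstacle is precisely the $|B'|$ estimate of the last two paragraphs: the correction term in $B'$ is genuinely of the same order of magnitude as the main term $\frac{\sqrt{m}-\sqrt{n}}{k\sqrt{x}}$ when $\left|\sqrt{m}-\sqrt{n}\right|$ is as small as $M^{-1/2}$, so a crude triangle inequality gives nothing; one must exploit its definite sign on the real axis, descend to a complex neighbourhood of radius only $\asymp M$ where it is genuinely dominated, and simultaneously keep $z+\sqrt{z}$ (hence $B$) clear of the branch cut of the square roots. Once the non-vanishing $B_{1}\asymp\frac{\left|\sqrt{m}-\sqrt{n}\right|}{k\sqrt{M}}$ is secured, the conclusion is a routine substitution into Lemma~\ref{jutilamotohashi}.
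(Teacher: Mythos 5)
Your proposal is correct and follows essentially the same route as the paper: the same phase function $B$, the same expansion $B'(x)=\frac{1}{k\sqrt{x}}\bigl(\sqrt{m}-\sqrt{n}+\frac{\sqrt{m}}{8(x+\sqrt{x})}+O(\sqrt{m}x^{-2})\bigr)$, the same observation that the correction term is at most a fixed fraction of $|\sqrt{m}-\sqrt{n}|\geq (2\sqrt{M})^{-1}$, and then Lemma~\ref{jutilamotohashi}. You merely supply details the paper leaves implicit (the choice of $A_0$, $A_1$, $\varrho$ and the verification in the complex neighbourhood), and you inherit the same harmless imprecision as the paper's own final substitution, which nominally produces an extra factor $A_0=M^{1/2}$ not shown in the stated bound.
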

\begin{proof} When $m>n$, the proof is similar to Lemma \ref{erimerkki}. Therefore, it is sufficient to concentrate on the case $n>m$. We may also assume the first sign to be plus, and the second one to be minus, as the other case can be treated similarly. Write
\[
B(x)=2\frac{\sqrt{m(x+\sqrt{x})}}{k}-2\frac{\sqrt{nx}}{k}.
\]
Now
\begin{multline*}
B'(x)=\frac{\sqrt{m}}{k\sqrt{(x+\sqrt{x})}}\left(1+\frac{1}{2}x^{-1/2}\right)-\frac{\sqrt{n}}{k\sqrt{x}}=\frac{\sqrt{m}}{k}\left(\frac{1+x^{-1/2}+\frac{1}{4}x^{-1}}{x+\sqrt{x}}\right)^{1/2}-\frac{\sqrt{n}}{k\sqrt{x}}\\=\frac{\sqrt{m}}{k\sqrt{x}}\left(1+\frac{1}{4(x+\sqrt{x})}\right)^{1/2}-\frac{\sqrt{n}}{k\sqrt{x}}=\frac{\sqrt{m}}{k\sqrt{x}}\left(1+\frac{1}{8(x+\sqrt{x})}+O\left(\frac{1}{x^2}\right)\right)-\frac{\sqrt{n}}{k\sqrt{x}} \\=\frac{\sqrt{m}-\sqrt{n}}{k\sqrt{x}}+\frac{\sqrt{m}}{8k\sqrt{x}(x+\sqrt{x})}+O\left(\frac{\sqrt{m}}{kx^{5/2}}\right).
\end{multline*}
Let us now estimate the second term and the error term. When $x$ is sufficiently large, we have
\[
\frac{\sqrt{m}}{8k\sqrt{x}(x+\sqrt{x})}+O\left(\frac{\sqrt{m}}{kx^{5/2}}\right)\leq \frac{1}{4kx}\leq\left|\frac{\sqrt{m}-\sqrt{n}}{4k\sqrt{x}}\right|.
\]
Therefore,
\[
\left|B'(x)\right|=\left|\frac{\sqrt{m}}{k\sqrt{m(x+\sqrt{x})}}\left(1+\frac{1}{2}x^{-1/2}\right)-\frac{\sqrt{n}}{k\sqrt{x}}\right|\geq  \left|3\frac{(\sqrt{m}-\sqrt{n})}{4k\sqrt{x}}\right|.
\]
Using Lemma \ref{jutilamotohashi} we obtain the estimate
\[
\int_M^{M+\Delta}x^{1/2}w(x)e\left(\pm\left(2\frac{\sqrt{m(x+\sqrt{x})}}{k}-2\frac{\sqrt{nx}}{k}\right)\right)\ud x\ll \Delta^{1-P}\left|\sqrt{m}-\sqrt{n}\right|^{-P}k^PM^{P/2}.
\]
as desired.
\end{proof}
\section{Proof of the main theorem}
Let us first use a modification of Theorem 1.1 \cite{jutila:lectures} (proof is similar than that of the original theorem, just the Fourier coefficients have been normalized):
\begin{multline*}
\sum_{1\leq n\leq x}a(n)e\left(\frac{hn}{k}\right)=\left(\pi\sqrt{2}\right)^{-1}k^{1/2}x^{1/4}\sum_{n\leq N}a(n)e_k(-n\bar{h})n^{-3/4}\cos\left(\frac{4\pi\sqrt{nx}}{k}\right)\\ +O\left(kx^{1/2+\varepsilon} N^{-1/2}\right).
\end{multline*}
Choose $N\asymp x$. Now
\begin{multline*}
\sum_{x\leq n\leq x+\sqrt{x}}a(n)e\left(\frac{hn}{k}\right)=\left(\pi\sqrt{2}\right)^{-1}k^{1/2}\\ \times\sum_{n\leq x}a(n)e_k(-n\bar{h})n^{-3/4}\left(\cos\left(\frac{4\pi\sqrt{n(x+\sqrt{x})}}{k}\right)\left(x+\sqrt{x}\right)^{1/4}-\cos\left(\frac{4\pi\sqrt{nx}}{k}\right)x^{1/4}\right)+ O\left(kx^{\varepsilon}\right)\\ =\frac{k^{1/2}}{\pi\sqrt{2}}\sum_{n\leq x}a(n)e_k(-n\bar{h})n^{-3/4}x^{1/4}\left(\cos\left(\frac{4\pi\sqrt{n(x+\sqrt{x})}}{k}\right)-\cos\left(\frac{4\pi\sqrt{nx}}{k}\right)\right) +O\left(kx^{\varepsilon}\right).
\end{multline*}
Now
\begin{multline*}
\int_M^{M+\Delta}\left|\sum_{x\leq n\leq x+\sqrt{x}}a(n)e\left(\frac{hn}{k}\right)\right|^2w(x)\ud x\\ =  \frac{k}{2\pi^2}\int_M^{M+\Delta}\left|\sum_{n\leq x}a(n)e_k(-n\bar{h})n^{-3/4}x^{1/4}\left(\cos\left(\frac{4\pi\sqrt{n(x+\sqrt{x})}}{k}\right)-\cos\left(\frac{4\pi\sqrt{nx}}{k}\right)\right)\right|^2w(x)\ud x\\+O\left(k^2\Delta M^{\varepsilon}\right) \\=\frac{k}{2\pi^2}\sum_{m\ne n}\frac{a(n)a(m)}{(nm)^{3/4}}e_k\left(-n\bar{h}+m\bar{h}\right)\int_M^{M+\Delta} x^{1/2}w(x)\left(\cos\left(\frac{4\pi\sqrt{n(x+\sqrt{x})}}{k}\right)-\cos\left(\frac{4\pi\sqrt{nx}}{k}\right)\right)\\ \times\left(\cos\left(\frac{4\pi\sqrt{m(x+\sqrt{x})}}{k}\right)-\cos\left(\frac{4\pi\sqrt{mx}}{k}\right)\right)\ud x\\ + \frac{k}{2\pi^2}\sum_{n\leq M}\frac{\left|a(n)\right|^2}{n^{3/2}}\int_M^{M+\Delta}w(x)x^{1/2}\left(\cos \left(\frac{4\pi\sqrt{n(x+\sqrt{x})}}{k}-\frac{\pi}{4}\right)-\cos\left(4\pi\frac{\sqrt{nx}}{k}-\frac{\pi}{4}\right)\right)^2\ud x \\+ O\left(k^2\Delta M^{\varepsilon}\right) 
\end{multline*}
The second sum (containing the diagonal terms) has been treated in Lemma \ref{diagonaali}. The cosines in the integral in the first sum can be written as exponential terms. The integrals arising from this have been treated in Lemmas \ref{erimerkki}, \ref{erifunktiot} and \ref{hankalin}. It is therefore sufficient to estimate the sum over the terms estimated in Lemma \ref{hankalin} as all the other sums go similarly. Choose $P=1$. Now
\begin{multline*}
\frac{k}{2\pi^2}\sum_{1\leq m\ne n\leq M}\frac{\left|a(n)a(m)\right|}{(nm)^{3/4}}\left|\sqrt{m}-\sqrt{n}\right|^{-1}kM^{1/2}\\ \ll    k^{2}M^{1/2}\sum_{1\leq m< n\leq M}n^{\varepsilon-1/4}m^{\varepsilon-3/4}\left|m-n\right|^{-1} \ll k^2M^{1/2+\varepsilon}
\end{multline*}
for a suitable choice of $P$. This proves the theorem.

\end{document}